\date{}
\newtheorem{theorem}{Theorem}
\newtheorem{lemma}[theorem]{Lemma}
\newtheorem{corollary}[theorem]{Corollary}
\newtheorem{proposition}[theorem]{Proposition}
\newtheorem{problem}[theorem]{Problem}
\newtheorem{example}[theorem]{Example}
\theoremstyle{definition}
\newtheorem{remark}[theorem]{Remark}
\newcommand{\suc}{\mathrm{succ}}
\newcommand{\pre}{\mathrm{prec}}
\def\phi{\varphi}
\def\int{\operatorname{int}}
\newcommand{\IZ}{\mathbb Z}
\newcommand{\IN}{\mathbb N}
\newcommand{\IR}{\mathbb R}
\begin{document}
\title[Zero-sum subsets of decomposable sets in Abelian groups]{Zero-sum subsets of decomposable sets\\ in Abelian groups}
\author{Taras Banakh and Alex Ravsky}
\address{T.~Banakh: Ivan Franko National University of Lviv (Ukraine) and Jan Kochanowski University in Kielce (Poland)}
\email{t.o.banakh@gmail.com}
\address{A.~Ravsky: Department of Analysis, Geometry and Topology,
Pidstryhach Institute for Applied Problems of Mechanics and Mathematics
National Academy of Sciences of Ukraine}
\email{alexander.ravsky@uni-wuerzburg.de}
\subjclass{05E15}
\keywords{decomposable set, abelian group, sum-set}

\keywords{}
\begin{abstract} A subset $D$ of an abelian group is {\em decomposable} if $ \emptyset\ne D\subset D+D$. In the paper we give partial answers to an open problem asking whether every finite decomposable subset $D$ of an abelian group contains a non-empty subset $Z\subset D$ with $\sum Z=0$. For every $n\in\IN$ we present a decomposable subset $D$ of cardinality $|D|=n$ in the cyclic group of order $2^n-1$ such that $\sum D=0$, but $\sum T\ne 0$ for any proper non-empty subset $T\subset D$. On the other hand, we  prove that every decomposable subset $D\subset\IR$ of cardinality $|D|\le 7$ contains a non-empty subset $T\subset D$ of cardinality $|Z|\le\frac12|D|$ with $\sum Z=0$. For every $n\in\IN$ we present a subset $D\subset\IZ$ of cardinality $|D|=2n$ such that $\sum Z=0$ for some subset $Z\subset D$ of cardinality $|Z|=n$ and $\sum T\ne 0$ for any  non-empty subset $T\subset D$ of cardinality $|T|<n=\frac12|D|$. Also we prove that every finite decomposable subset $D$ of an Abelian group contains two non-empty subsets $A,B$ such that $\sum A+\sum B=0$.
\end{abstract}
\maketitle \baselineskip15pt

\section{Introduction}

A subset $D$ of an Abelian group $G$ is called \emph{decomposable} if $\emptyset\ne D\subset D+D:=\{x+y:x,y\in D\}$. For a finite subset $F$ of an Abelian group put $\sum F=\sum_{x\in F}x$.  In this paper we discuss the following open problem, whose special case for the additive group of real numbers was posed by Gjergji Zaimi in 2010 on {\tt MathOverflow} \cite{Z}.

\begin{problem}\label{prob} Let $D$ be a finite decomposable subset of an Abelian group $G$. Is $\sum Z=0$ for some non-empty set $Z\subset D$?
\end{problem}

Observe that the answer to Problem~\ref{prob} is affirmative if the group $G$ is {\em Boolean} (which means that $x+x=0$ for any $x\in G$). Indeed, take any element $a\in D$ and find elements $b,c\in D$ with $a=b+c$. If $0\in\{a,b,c\}$, then $T=\{0\}\subset\{a,b,c\}\subset D$ is a subset with $\sum T=0$. If $0\notin \{a,b,c\}$, then the elements $a,b,c$ are pairwise distinct and the set $T=\{a,b,c\}$ has $\sum T=a+b+c=2(b+c)=0$. Therefore, any decomposable set $D$ in a Boolean group contains a subset $T\subset D$ of cardinality $|T|\le 3$ with $\sum T=0$.

This simple upper bound does not hold for decomposable sets in arbitrary groups.

\begin{example}\label{ex1} Let $n\ge 2$ and $G$ be a cyclic group of order $2^{n}-1$ with generator $g$. The set $D=\{2^kg:0\le k<n\}$ is decomposable and has $\sum D=0$ but $\sum T\ne\emptyset$ for any subset $T\subset D$ of cardinality $0<|T|<|D|=n$.
\end{example}

\begin{example}\label{ex2} For every $n\in\IN$ the subset $$D_{n}:=\{2^k:0\le k<n\}\cup \{2^k-2^n+1:0\le k<n\}$$ of cardinality $|D_n|=2n$ in the infinite cyclic group $\IZ$ is decomposable and the subset $$Z=\{2^k:1\le k<n\}\cup\{2-2^n\}\subset D_{n}$$ of cardinality $|Z|=n$ has $\sum Z=0$. On the other hand, $\sum T\ne0$ for every non-empty subset $T\subset D_{n}$ of cardinality $|T|<n$.
\end{example}

The properties of the set $D_{n}$ from Example~\ref{ex2} will be established in Section~\ref{s:ex}.

The above examples suggest to assign to every finite subset $D$ of an Abelian group $G$ the largest number $z(D)\le |D|+1$ such that $\sum T\ne 0$ for any non-empty subset $T\subset D$ of cardinality $|T|<z(D)$.   Therefore, $z(D)$ is equal to the smallest cardinality of a subset $Z\subset D$ with $\sum Z=0$ if such set $Z$ exists and $z(D)=|D|+1$ in the opposite case.

In terms of the number $z(D)$, Problem~\ref{prob} can be reformulated as follows.

\begin{problem}\label{prob4} Let $D$ be a finite decomposable subset of an Abelian group. Is $z(D)\le |D|$?
\end{problem}

The decomposable set $D_n$ from Example~\ref{ex1} has $z(D)=|D|=n$ and the decomposable set $D_n\subset\IZ$ from Example~\ref{ex2} has $z(D_n)=n=\frac12|D_n|$.

This suggests the following refinement of Problems~\ref{prob} and \ref{prob4} for the infinite cyclic group $\IZ$.

\begin{problem}\label{prob:z} Is $z(D)\le \frac12|D|$ for any finite decomposable subset $D\subset \IR$?
\end{problem}

A special case of this problem was posed by Aryabhata in \cite{A}.

\begin{problem}[Aryabhata] Is $z(D)\le 7$ for any  decomposable subset $D\subset \IZ$ of cardinality $|D|=15$?
\end{problem}

In Section~\ref{s:p7} we shall provide an affirmative answer to Problem~\ref{prob:z} for decomposable subsets $D\subset \IZ$ of cardinality $|D|\le 7$.

\begin{proposition}\label{p7} Any decomposable subset $D\subset\IR$ of cardinality $|D|\le 7$ has $z(D)\le\frac12|D|$.
\end{proposition}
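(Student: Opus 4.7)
My plan is a case analysis on $n=|D|\in\{2,\ldots,7\}$, after sorting $D=\{d_{1}<\cdots<d_{n}\}\subset\mathbb{R}$. Throughout I reduce to the situation where $0\notin D$ (else $Z=\{0\}$ gives $|Z|=1\le n/2$) and where no two distinct $d_{i},d_{j}\in D$ satisfy $d_{i}+d_{j}=0$ (else such a pair gives $|Z|=2\le n/2$, which already settles the cases $n\ge 4$). Under these reductions I want, for $n\le 5$, to derive an outright contradiction, and for $n\in\{6,7\}$ to additionally produce a zero-sum triple $\{d_{i},d_{j},d_{k}\}\subset D$.

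The structural engine is the following observation: for every $d\in D$ the representation $d=a+b$ with $a,b\in D$ granted by decomposability cannot use $d$ itself as a summand, since then the other summand would be $0\notin D$. Applied to $d_{1}$ this forces $a,b\in\{d_{2},\ldots,d_{n}\}$, whence $d_{1}=a+b\ge 2d_{2}$ and consequently $d_{2}\le d_{1}/2<0$; symmetrically $d_{n-1}\ge d_{n}/2>0$. For $n=2$ and $n=3$ this already leads to contradictions (e.g.\ for $n=3$ the middle element $d_{2}$ would need to be both negative and positive), so both cases force $0\in D$.

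For $n\in\{4,5\}$ I would iterate similar reasoning on $d_{2}$ and $d_{n-1}$ themselves — they too admit representations in $D+D$, each of which imposes a further linear identity among the $d_{i}$ — and cross-reference these identities with the forbidden equations $d_{i}+d_{j}=0$. A finite enumeration of the resulting sign and size patterns should then show that each surviving pattern produces a forbidden zero-sum pair, completing these cases.

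For $n\in\{6,7\}$ the same enumeration does not always end in contradiction, so in each surviving family I would exhibit a zero-sum triple. The typical construction combines the representations of two extremal elements — writing $d_{1}=a+b$ and $d_{n}=a'+b'$ forces identifications among the entries of $\{a,b,a',b'\}\subset D$ — to yield three pairwise distinct elements $x,y,z\in D$ with $x+y+z=0$. The main obstacle I anticipate lies in verifying, within the finitely many surviving $n=7$ families, that the putative triple $\{x,y,z\}$ truly consists of three pairwise distinct elements of $D$ rather than accidentally collapsing to a zero-sum pair (which would have been excluded by the reduction) or requiring an element only of $D+D\setminus D$; this verification relies crucially on the linear ordering of $\mathbb{R}$ and the iterated consequences of the bounds $d_{2}\le d_{1}/2$ and $d_{n-1}\ge d_{n}/2$.
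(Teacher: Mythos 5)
Your opening reductions are sound and essentially coincide with the paper's: dispose of $0\in D$ and of zero-sum pairs, observe that a representation $d=a+b$ cannot use $d$ itself as a summand when $0\notin D$, and deduce that the extreme elements satisfy $d_1\ge 2d_2$ and $d_n\le 2d_{n-1}$, which kills $|D|\le 3$ and, after one more iteration on $d_2$ (resp.\ $d_{n-1}$), also $|D|\in\{4,5\}$. Up to that point your sketch is correct and matches Lemmas~\ref{l3} and \ref{l45}.

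The genuine gap is that for $|D|\in\{6,7\}$ you have only announced a plan, and that is precisely where all the content of the proposition lives. Two concrete problems. First, your ``engine'' yields inequalities such as $d_{n-1}\ge d_n/2$, but to reduce to finitely many cases one needs exact identities: the paper shows that either a zero-sum pair appears or $p_{\max}=2p'$ and $n_{\min}=2n'$ \emph{exactly}, then iterates to get $n'=2n''$ or $p'=2p''$, and only these equalities pin $D$ down to a discrete list of candidates (culminating, for $|D|=6$, in the single surviving family $\{1,2,4,-3,-5,-6\}\cdot r$, where one must actually exhibit the triple $\{1,2,-3\}$ rather than reach a contradiction). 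You gesture at ``linear identities'' but do not derive them, and a sign/size pattern enumeration based on inequalities alone does not terminate in finitely many real-parameter families. Second, for $|D|=7$ the paper must first prove a normalization step (after scaling, $-2,-3,-4,1$ all lie in $D$) and then run a five-way case split on the representation of $1$, one branch of which ($1=0.5+0.5$) needs a separate argument about the seventh, undetermined element of $D$ via $4.5\notin D'+D'$. None of this is anticipated by ``combine the representations of two extremal elements,'' and you yourself flag that you have not verified the distinctness of the putative triples. As it stands the proposal is an outline whose hard half is missing; to complete it you would have to carry out case analyses of essentially the same length and structure as the paper's Lemmas~\ref{l6} and \ref{l7}.
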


We also observe that for $n\in\{2,3\}$ the decomposable set $D_n=\{2^k,2^k-2^n+1:0\le k<n\}$ form Example~\ref{ex2} is unique in the following sense.

\begin{proposition}\label{p8} Every decomposable set $D\subset \mathbb R$ with $n=z(D)=\frac12|D|\in\{2,3\}$ is equal to $D_n{\cdot}r$ for some real number $r$.
\end{proposition}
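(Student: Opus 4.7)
\emph{Proof plan.}
The conclusion $D = rD_n$ is scale-invariant, so I shall normalize one element of $D$ and aim to show $D = D_n$ exactly. The hypothesis $z(D) = n$ gives $0 \notin D$, and for $n = 3$ additionally no pair of elements of $D$ sums to zero (i.e., $D$ contains no antipodal pair). Combined with $D \subset D + D$, this forces $\min D < 0 < \max D$: otherwise $\min D$ or $\max D$ could not lie in $D + D$.

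For $n = 2$, the zero-sum pair in $D$ is an antipodal pair $\{a, -a\}$; scaling, take $a = 1$, so $D = \{-1, 1, c, d\}$ with $c, d \notin \{-1, 0, 1\}$, $c \ne d$. If $\{c, d\}$ is also antipodal, write $c > 0$; then $D + D = \{0, \pm 2, \pm(c-1), \pm(c+1), \pm 2c\}$, and the requirement $1 \in D + D$ forces $c \in \{2, 1/2\}$, giving $D \in \{D_2, \tfrac12 D_2\}$. Otherwise $c + d \ne 0$; a sign argument (both $\pm 1 \in D + D$ must hold) shows that $c$ and $d$ have opposite signs, and the constraints $\pm 1 \in D + D$ together with decomposability of $c$ and $d$ yield a short finite list of candidate pairs $\{c, d\}$, each of which fails a direct check of $D + D$.

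For $n = 3$, let $D^+ = D \cap (0, \infty)$ and $D^- = D \cap (-\infty, 0)$. I first show $|D^+| = |D^-| = 3$. The splits $|D^+| \in \{0, 1, 5, 6\}$ are ruled out by $\min D < 0 < \max D$ or by the impossibility of decomposing the unique minority element (any such sum has the wrong sign or is too large). For $|D^+| = 2$ with $D^+ = \{p_1 < p_2\}$, decomposability of $\max D$ gives $p_2 = 2 p_1$, and then decomposability of $p_1$ forces $p_1 = p_i + n_j$ with $n_j \in \{0, -p_1\}$, contradicting either $0 \notin D$ or the no-antipodal-pair condition; the symmetric argument excludes $|D^+| = 4$. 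Now write $D^+ = \{p_1 < p_2 < p_3\}$ and normalize $p_1 = 1$. Decomposability of $p_3$ forces $p_3 \in \{2, 1 + p_2, 2 p_2\}$; decomposability of $p_1 = 1$ forces $1 - p_j \in D^-$ for some $j \in \{2, 3\}$. A case analysis on these options, using decomposability of $p_2$ and of every element of $D^-$ combined with the no-antipodal-pair condition, pins down $D^+ = \{1, 2, 4\}$ and $D^- = \{-6, -5, -3\}$, so $D = D_3$.

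The principal obstacle is the $n = 3$ branching: configurations like $D^+ = \{1, p_2, 2\}$ with $p_2 \in (1, 2)$ survive the constraint on $\max D$ and must be eliminated by tracing decomposability through the intermediate elements of $D^-$. The no-antipodal-pair condition from $z(D) = 3$ is the crucial filter at each step, excluding symmetric configurations that would otherwise be consistent with the chain of decompositions.
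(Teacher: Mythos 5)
Your overall strategy is the same as the paper's (its Lemmas~\ref{l4} and \ref{l6}): exclude $0\in D$ and antipodal pairs, balance the number of positive and negative elements, and chase decompositions of the extremal elements. The $n=2$ half is correct and essentially complete in outline. The $n=3$ half, however, contains a concrete error that would derail the case analysis. You claim that after normalizing the smallest positive element to $p_1=1$ the analysis ``pins down $D^+=\{1,2,4\}$ and $D^-=\{-6,-5,-3\}$,'' and in particular that configurations $D^+=\{1,p_2,2\}$ with $p_2\in(1,2)$ ``must be eliminated.'' They cannot be: the set $-\tfrac13 D_3=\{1,\tfrac53,2\}\cup\{-\tfrac43,-\tfrac23,-\tfrac13\}$ is decomposable, has $z=3$ and smallest positive element $1$, and realizes exactly the branch $p_2=\tfrac53\in(1,2)$, $p_3=2p_1$. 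It satisfies the proposition (with $r=-\tfrac13<0$) but not your claimed endpoint. So your case analysis must either terminate in two configurations ($D_3$ and $-\tfrac13D_3$) or be preceded by a sign-reversal reduction, as the paper does in Lemma~\ref{l6} (``reverting the signs of elements of $D$, if needed''); as written, you would reach the branch $D^+=\{1,\tfrac53,2\}$ and be unable to derive the contradiction you expect.

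Beyond that, essentially all of the difficulty of the $n=3$ case is hidden in the sentence ``a case analysis \dots pins down $D^+$ and $D^-$'': the paper's Lemma~\ref{l6} needs about a page of branching (on how $n_{\min}$, $n'$, $n''$ decompose) to close every case. Your stated constraints ($p_3\in\{2,\,1+p_2,\,2p_2\}$ and $1-p_j\in D^-$ for some $j$) are correct starting points, and your reduction to $|D^+|=|D^-|=3$ is sound, but the proposal does not demonstrate that the branches actually close --- and, as noted above, at least one of them closes differently than you assert.
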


\begin{problem} Is every finite decomposable set $D\subset \IR$ with $z(D)=\frac12|D|=n\ge 2$ equal to $D_n{\cdot}r$ for some  real number $r$?
\end{problem}

The following proposition shows that the problems on fiinite decomposable sets in torsion-free Abelian groups can be reduced to the case of the infinite cyclic group.

\begin{proposition} For any finite decomposable set $D$ is a torsion-free Abelian group $G$ there exists a  decomposable set $D'\subset\IZ$ such that $|D'|=|D|$ and $z(D')=z(D)$.
\end{proposition}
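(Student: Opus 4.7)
The plan is to reduce to a free abelian group of finite rank by passing to the subgroup $H\le G$ generated by $D$, and then to project ``generically'' from $H$ to $\IZ$.

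First I would observe that since $D$ is finite, $H$ is finitely generated, and since $G$ is torsion-free so is $H$; hence $H\cong\IZ^n$ for some $n\in\IN$. After fixing such an isomorphism I may assume $G=\IZ^n$. Next I would collect the finitely many nonzero vectors that must not be killed by the eventual projection. Let
\[
E:=\bigl\{\textstyle\sum T:\emptyset\ne T\subset D,\ \sum T\ne 0\bigr\}\cup\bigl\{x-y:x,y\in D,\ x\ne y\bigr\}\subset\IZ^n\setminus\{0\}.
\]
For each $e\in E$ the hyperplane $e^\perp=\{v\in\Q^n:v\cdot e=0\}$ is a proper $\Q$-subspace of $\Q^n$, and a finite union of proper $\Q$-subspaces cannot cover $\IZ^n$. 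Pick $v\in\IZ^n$ outside $\bigcup_{e\in E}e^\perp$ and define $\phi:\IZ^n\to\IZ$ by $\phi(x):=v\cdot x$; then $\phi$ is a group homomorphism with $\phi(e)\ne 0$ for every $e\in E$.

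Finally I would set $D':=\phi(D)\subset\IZ$ and check the three required properties. The condition $\phi(x-y)\ne 0$ for distinct $x,y\in D$ makes $\phi|_D$ injective, so $|D'|=|D|$. Decomposability of $D'$ is immediate: given $x\in D$ with $x=y+z$ for some $y,z\in D$, one has $\phi(x)=\phi(y)+\phi(z)\in D'+D'$, hence $D'\subset D'+D'$. For the identity $z(D')=z(D)$, note that $\phi|_D$ induces a bijection between non-empty subsets $T\subset D$ and non-empty subsets $T'=\phi(T)\subset D'$ with $|T|=|T'|$ and $\sum T'=\phi(\sum T)$; the defining property of $E$ yields $\sum T=0\iff \sum T'=0$ (the nontrivial direction: if $\sum T\ne 0$ in $G$, then $\sum T\in E$, hence $\phi(\sum T)\ne 0$). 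Matching these equivalences cardinality by cardinality gives $z(D')=z(D)$.

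The only non-routine step is the hyperplane-avoidance argument, and this is the single place where the torsion-freeness of $G$ is essentially used: torsion-freeness allows us to embed $H$ into $\IZ^n$ so that elements of $E$ remain honest nonzero rational vectors, and then their orthogonal complements are proper $\Q$-subspaces whose finite union misses some lattice point.
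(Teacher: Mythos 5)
Your proof is correct, and it follows the same overall route as the paper (pass to the finitely generated torsion-free subgroup generated by $D$, identify it with $\IZ^n$, then push $D$ forward to $\IZ$ along a homomorphism that kills no relevant nonzero element), but the key step is implemented differently. The paper chooses $m$ with $\{\sum F:F\subset D\}\subset[-m,m]^n$ and uses the explicit ``base-$(2m+1)$'' homomorphism $h(e_i)=(2m+1)^i$, which is injective on the whole box $[-m,m]^n$ and hence automatically injective on $D$ and on the set of subset sums. You instead list the finitely many vectors that must survive (the nonzero subset sums and the differences of distinct elements of $D$) and choose a linear functional $v\cdot{-}$ avoiding the finitely many orthogonal hyperplanes, which is possible since a finite union of proper $\Q$-subspaces cannot contain $\IZ^n$. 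The paper's construction is completely explicit and gives a concrete bound on the image; your generic-projection argument is slightly more flexible (one only ever needs to protect a finite list of nonzero elements) and makes visible exactly where torsion-freeness enters, but it is non-constructive in the choice of $v$. Both arguments establish $|D'|=|D|$, decomposability of $D'$, and the equivalence $\sum T=0\iff\sum\phi(T)=0$, hence $z(D')=z(D)$.
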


\begin{proof} We lose no generality assuming that $G$ is finitely-generated and hence is isomorphic to $\IZ^n$ for some $n\in\IN$. Let $e_1,\dots,e_n$ be the standard generators of the group $\IZ^n$. Find $m\in\IN$ such that $\{\sum F:F\subset D\}\subset \{\sum_{i=1}^nx_ie_i:(x_i)_{i=1}^n\in[-m,m]^n\}=[-m,m]^n$. Consider the homomorphism $h:\IZ^n\to\IZ$ such that $h(e_i)=(2m+1)^i$ for all $1\le i\le n$.   It is easy to see that the restriction $h{\restriction}[-m,m]^n$ is injective. Consequently, a subset $F\subset D$ has $\sum F=0$ if and only if $\sum h(F)=0$. This implies that for the set $D'=h(D)$ we have the equalities $|D'|=|D|$ and $z(D')=z(D)$.
\end{proof}

Our final result provides an affirmative answer to a weak version of Problem~\ref{prob}. The following theorem will be proved in Section~\ref{s:tree}.

\begin{theorem}\label{t:main} For any finite decomposable subset $D$ of an Abelian group there are two non-empty sets $A,B\subset D$ such that $\sum A+\sum B=0$.
\end{theorem}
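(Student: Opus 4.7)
The plan is to choose decomposition maps $\phi, \psi : D \to D$ with $d = \phi(d) + \psi(d)$ for every $d \in D$; such maps exist since $D \subseteq D+D$. We may assume $0 \notin D$, since otherwise $A = B = \{0\}$ already satisfies the conclusion; this assumption also forces both $\phi$ and $\psi$ to be fixed-point-free, because $\phi(d) = d$ would give $\psi(d) = 0 \notin D$.

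The functional digraph of $\phi$ on the finite set $D$ must then contain a cycle $(c_0, c_1, \dots, c_{m-1})$ of length $m \ge 2$ with $\phi(c_i) = c_{i+1 \bmod m}$. Setting $s_i := \psi(c_i) = c_i - c_{i+1}$, the telescoping identity
\[
\sum_{i=0}^{m-1} s_i \;=\; \sum_{i=0}^{m-1}(c_i - c_{i+1}) \;=\; 0
\]
yields a multiset $S = (s_0, \dots, s_{m-1})$ of $m \ge 2$ elements of $D$ whose total sum is zero. When every element of $D$ occurs in $S$ with multiplicity at most two, non-empty subsets $A, B \subseteq D$ can be read off directly: place each multiplicity-two element of $S$ into both $A$ and $B$, and distribute the multiplicity-one elements so that both $A$ and $B$ remain non-empty; then $\sum A + \sum B = \sum S = 0$.

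The delicate case is when some $e \in D$ appears in $S$ with multiplicity at least three. To handle this, the plan is to iteratively refine $S$: replacing one occurrence of such an $e$ by the pair $\phi(e), \psi(e)$ preserves the zero sum while decreasing the multiplicity of $e$ by one, and by choosing different decompositions of $e$ when several are available, the hope is to reach a multiset whose multiplicities are all at most two, from which $A$ and $B$ can then be extracted as above. The hardest part is proving termination of this reduction, since naive splitting can temporarily inflate the multiplicity of $\phi(e)$ or $\psi(e)$ above two; this will likely require a carefully chosen potential function (for instance the lexicographic order on the sorted multiplicity vector) together with the flexibility of decomposition, or alternatively the combination of the sibling multiset of a $\phi$-cycle with the analogous sum-zero multiset arising from a $\psi$-cycle, so that the two zero-sum multisets together yield a $\{0,1,2\}$-valued relation directly.
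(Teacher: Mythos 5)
Your reduction of the theorem to the existence of a non-empty zero-sum multiset supported in $D$ with all multiplicities in $\{1,2\}$ is correct, and the telescoping identity along a $\phi$-cycle cleanly produces \emph{some} non-empty zero-sum multiset over $D$. But that much is essentially the already-known weighted statement (compare the proposition following Corollary~\ref{c:main}, due to Chang): one obtains $\sum_{x\in D}f(x)\cdot x=0$ with non-negative integer coefficients and no control on their size. The whole content of Theorem~\ref{t:main} is forcing the coefficients into $\{0,1,2\}$, and that is exactly the step your proposal leaves open. The bad case genuinely occurs: in the cyclic group of order $9$ the set $D=\{1,3,4,5,7\}$ is decomposable via $1=7+3$, $7=4+3$, $4=1+3$, $3=5+7$, $5=1+4$; the $\phi$-cycle $1\to 7\to 4\to 1$ yields $S=\{3,3,3\}$. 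Here one splitting step happens to repair it, but in general your refinement procedure comes with no termination argument: replacing an occurrence of $e$ by $\phi(e),\psi(e)$ strictly enlarges the multiset, can push the multiplicities of $\phi(e)$ and $\psi(e)$ past $2$, and in a torsion group the same elements can be regenerated indefinitely (in the cyclic group of order $7$ with $D=\{1,2,4\}$ one has $1=4+4$, $2=1+1$, $4=2+2$, so splitting cycles among the three elements forever). No potential function is exhibited, and your closing sentences are an acknowledged hope rather than a proof; as written the argument only covers the case where the cycle's sibling multiset happens to have multiplicities at most $2$.

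For comparison, the paper avoids multiplicities altogether. It builds a maximal binary ``$D$-tree'' in which every node is labelled by an element of $D$ equal to the sum of the labels of its two children, subject to a $\perp$-injectivity condition guaranteeing that the siblings hanging off any single branch carry pairwise distinct labels — hence form genuine \emph{subsets} of $D$ rather than multisets. Maximality forces a regressive function on the leaves, and a pigeonhole lemma on regressive functions over binary trees (Lemma~\ref{regres}) produces two leaves whose meeting point yields the sets $A$ and $B$ directly. If you wish to pursue your cycle-based approach, the missing ingredient is precisely a mechanism that guarantees distinctness of the collected summands; that is what the $\perp$-injectivity condition supplies in the paper.
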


\begin{corollary}\label{c:main} For any finite decomposable subset $D$ of an Abelian group, there exists a non-empty subset $T\subset D$ and a function $f:T\to\{1,2\}$ such that $\sum_{x\in T}f(x)\cdot x=0$.
\end{corollary}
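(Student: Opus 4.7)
The plan is to derive the corollary directly from Theorem~\ref{t:main}, which already gives non-empty subsets $A,B\subset D$ with $\sum A+\sum B=0$. The only work is to repackage the pair $(A,B)$ as a single subset with a multiplicity function valued in $\{1,2\}$, which is where the overlap $A\cap B$ enters.

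Concretely, I would set $T:=A\cup B$ and define $f:T\to\{1,2\}$ by $f(x)=2$ if $x\in A\cap B$ and $f(x)=1$ if $x\in T\setminus(A\cap B)=A\triangle B$. Since $A$ and $B$ are non-empty, so is $T$. Splitting each of $\sum A$ and $\sum B$ along the partition into $A\cap B$ and its complement inside $A$ (respectively $B$), one gets
\[
\sum A+\sum B=\sum_{x\in A\triangle B}x+2\sum_{x\in A\cap B}x=\sum_{x\in T}f(x)\cdot x,
\]
so the hypothesis $\sum A+\sum B=0$ from Theorem~\ref{t:main} immediately yields $\sum_{x\in T}f(x)\cdot x=0$, as required.

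There is essentially no obstacle here: the corollary is a bookkeeping reformulation of Theorem~\ref{t:main}. The only subtlety worth flagging is that elements counted twice in the multiset sum $\sum A+\sum B$ (namely the elements of $A\cap B$) must be encoded by the value $f(x)=2$, and that the symmetric difference $A\triangle B$ may be empty (when $A=B$) but the union $T=A\cup B$ is non-empty regardless.
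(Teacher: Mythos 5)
Your proposal is correct and is exactly the intended derivation: the paper states Corollary~\ref{c:main} as an immediate consequence of Theorem~\ref{t:main} without further argument, and your bookkeeping with $T=A\cup B$ and $f=1+\mathbf{1}_{A\cap B}$ is the natural way to make that explicit. The identity $\sum A+\sum B=\sum_{x\in T}f(x)\cdot x$ and the non-emptiness of $T$ are both verified correctly, so there is nothing to add.
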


A decomposable subset $D$ of an Abelian group is called {\em minimal decomposed} if no proper subset of $D$ is decomposed. It is clear that every finite decomposed set contains a minimal decomposed set.

Corollary~\ref{c:main} can be compared with the following result that was essentially proved by Hsien-Chih Chang \cite{HC} in his answer to the problem of Zaimi \cite{Z}.

\begin{proposition} For every finite minimal decomposable subset $D$ of an Abelian group there exists a function $f:D\to\omega$ such that $\sum_{x\in D}f(x)=|D|$ and $\sum_{x\in D}f(x)\cdot x=0$.
\end{proposition}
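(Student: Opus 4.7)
The plan is to encode the chosen decompositions as a multi-digraph on $D$ of out-degree $2$ and then read $f$ off from the in-degrees. The trivial case $|D|=1$ forces $D=\{0\}$, and $f(0):=1$ works, so I focus on $|D|\ge 2$. For every $x\in D$ I will fix a decomposition $x=a_x+b_x$ with $a_x,b_x\in D$ (possible since $D\subseteq D+D$) and build the multi-digraph $\Gamma$ on vertex set $D$ whose edges are $x\to a_x$ and $x\to b_x$, counted with multiplicity; by construction every vertex has out-degree exactly $2$, so $\Gamma$ carries $2|D|$ edges.

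The main step---and the only place where minimality enters---will be to prove that $\Gamma$ is strongly connected. Given $v\in D$, the set $S\subseteq D$ of vertices reachable from $v$ in $\Gamma$ is non-empty and closed under taking out-neighbours, so every $y\in S$ satisfies $y=a_y+b_y\in S+S$. Hence $S$ is a non-empty decomposable subset of $D$, which by minimality equals $D$. Since this holds for every $v\in D$, the digraph $\Gamma$ is strongly connected, and in particular---using $|D|\ge 2$---every vertex has in-degree at least $1$.

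With strong connectivity in hand, set $f(y):=\deg^-(y)-1\in\omega$, where $\deg^-(y)$ is the in-degree of $y$ in $\Gamma$. The total in-degree of $\Gamma$ equals its total out-degree $2|D|$, giving $\sum_{y\in D}f(y)=2|D|-|D|=|D|$. For the second identity, the multiset of edge-heads in $\Gamma$ is exactly $\{a_x,b_x:x\in D\}$ (with multiplicity), so
$$\sum_{y\in D}f(y)\cdot y=\sum_{y\in D}\deg^-(y)\cdot y-\sum_{y\in D}y=\sum_{x\in D}(a_x+b_x)-\sum_{x\in D}x=0,$$
using $a_x+b_x=x$ in the last step. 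I expect no obstacle beyond a careful multigraph bookkeeping when $a_x=b_x$ (in which case $x\to a_x$ and $x\to b_x$ are two distinct edges and contribute $2$ to $\deg^-(a_x)$); everything else is a one-line double-count.
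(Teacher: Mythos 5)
Your proof is correct and is essentially the paper's own argument in graph-theoretic clothing: your in-degree $\deg^-(y)$ is exactly the paper's $g(y)=|\alpha^{-1}(y)|+|\beta^{-1}(y)|$, and the double-count giving $\sum f=|D|$ and $\sum f(x)\cdot x=0$ is identical. The only (harmless) difference is that you invoke minimality via strong connectivity of the reachability sets, whereas the paper uses it more directly by observing that $\alpha(D)\cup\beta(D)$ is itself a non-empty decomposable subset and hence equals $D$, which already gives $\deg^-\ge 1$ everywhere.
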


\begin{proof}  By the decomposability of $D$, there exist functions $\alpha,\beta:D\to D$ such that $x=\alpha(x)+\beta(x)$ for every $x\in X$. For every $x\in D$ let $g(x)=|\alpha^{-1}(x)|+|\beta^{-1}(x)|$ and observe that $\sum_{x\in D}g(x)=|D|+|D|$. The minimal decomposability of $D$ ensures that $D=\alpha(D)\cup\beta(D)$ and hence $f(x):=g(x)-1\ge 0$ for every $x\in D$. Then $\sum_{x\in D}f(x)=(|D|+|D|)-|D|=|D|$.
It follows that $\sum D=\sum_{x\in D}(\alpha(x)+\beta(x))=\sum_{y\in D}g(y)\cdot y$ and hence $\sum_{x\in D}f(x)\cdot x=\sum_{x\in D}(g(x)-1)\cdot x=0$.
\end{proof}

\section{Properties of the decomposable set in Example~\ref{ex2}}\label{s:ex}

Given a natural number $n$ consider the subsets $A=\{1,2,4,\dots, 2^{n-1}\}$ and $$D_n:=A\cup (A-(2^n-1))$$ in the group $\IZ$ of integers. Then $|D_n|=2n$.

The set $D_n$ is decomposable, because, clearly, each element of the set $A\setminus\{1\}$ is decomposable,
$1=2^{n-1}+(2^{n-1}-(2^n-1))$, $2^k-(2^n-1)=2^{k-1}+(2^{k-1}-(2^n-1))$ for every positive $k<n$,
and $1-(2^n-1)=(2^{n-1}-(2^n-1))+(2^{n-1}-(2^n-1))$.
\smallskip

It is clear that the set $Z=\{2^k:1\le k<n\}\cup\{2^n-2\}$ has cardinality $|Z|=n=\frac12|D_n|$ and $\sum Z=0$.
\smallskip

Next, we prove that every subset $T\subset D_n$ of cardinality $|T|<n$ has $\sum T\ne 0$. Assuming that $\sum T=0$, we conclude that $n\ge 3$ and $T$ contains at most $n-2$ positive elements. Since all of
them are distinct elements of $A$, their sum is at most $2^n-4$. On the other hand, the largest negative element of the set $A-(2^n-1)$ is $-2^{n-1}+1=-(2^n-2)/2$. Thus if $T$ contains at least
two negative elements then $\sum T<0$. If $T$ contains exactly one negative element $2^k-2^n+1$ then $\sum T=0$
implies that we have a representation of $2^n-1$ as a sum of at most $n-1$ powers of $2$ with at most one power used twice.
This representation collapses to a sum of at most $n-1$ distinct powers of $2$.
If the representation contains a power $2^l$ with $l\ge n$ then it is bigger than $2^n-1$.
Otherwise the sum is at most $2^1+2^2+\dots +2^{n-1}=2^n-2<2^k-1$. Thus $z(D_n)\ge n$.

\section{Proof of Propositions~\ref{p7} and \ref{p8}}\label{s:p7}

We divide the proof of Propositions~\ref{p7} and \ref{p8} into five lemmas. In fact, Proposition~\ref{p8} follows from Lemmas~\ref{l4} and \ref{l6} proved below.

\begin{lemma}\label{l3} Every decomposable subset $D\subset \IR$ of cardinality $|D|\le 3$ contains zero and hence has $z(D)=1\le \frac12|D|$.
\end{lemma}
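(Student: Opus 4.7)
The plan is to prove the stronger statement that $0\in D$, whence $z(D)=1$ follows from $\sum\{0\}=0$ (and $z(D)\le\frac12|D|$ holds provided $|D|\ge 2$). For $|D|=1$ this is immediate: the single element $a\in D$ must lie in $D+D=\{2a\}$, forcing $a=0$.

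The key observation I will use throughout is that since $D\subset D+D$, the minimum $m$ and maximum $M$ of $D$ satisfy $m\ge\min(D+D)=2m$ and $M\le\max(D+D)=2M$, hence $m\le 0\le M$. For $|D|=2$, write $D=\{m,M\}$ with $m<M$; the condition $m\in D+D=\{2m,m+M,2M\}$ yields three sub-cases: $m=2m$ gives $m=0$, $m=m+M$ gives $M=0$, and $m=2M$ combined with $m\le 0\le M$ collapses to $m=M=0$, contradicting $m<M$.

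For $|D|=3$, write $D=\{m,b,M\}$ with $m<b<M$, assume $0\notin D$, and derive a contradiction. Enumerating the six possible representations $m=x+y$ with $x,y\in D$: the cases $m\in\{2m,m+b,m+M\}$ directly force $0\in\{m,b,M\}$, while $m=b+M$ is ruled out by $b,M>m$ and $M\ge 0$ (so $b+M>m$), and $m=2M$ is ruled out by $2M\ge 0>m$. The only surviving possibility is $m=2b$, which forces $m<0$ and $b=m/2<0$. Substituting this into the analogous enumeration for $M\in D+D$, every candidate is either strictly less than $M$ (the terms $2m,\,m+b,\,2b,\,b+M$ are all negative, and $m+M<M$), forces one of $m,b$ to be $0$, or is $M=2M$ forcing $M=0$. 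All cases contradict $0\notin D$, completing the proof.

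The argument is a routine sign-and-inequality case analysis; the main bookkeeping point arises in the $|D|=3$ branch, where one must propagate the forced relation $b=m/2$ through the representations of $M$ to rule out every case except those yielding $0\in D$.
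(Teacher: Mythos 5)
Your proof is correct. It reaches the same conclusion ($0\in D$) by the same elementary means---analyzing how extremal elements of $D$ can be written as sums of two elements of $D$---but it is organized as an exhaustive enumeration, whereas the paper uses a shortcut: assuming $0\notin D$, one notes that $D$ must contain elements of both signs (the maximum cannot decompose otherwise), so with $|D|\le 3$ some sign occurs exactly once; after replacing $D$ by $-D$ one may assume there is a unique positive element $p$, and then $p=a+b$ forces one summand to be positive (hence equal to $p$) and the other to be $0$. That single observation disposes of $|D|\in\{2,3\}$ at once, while your route has to carry the surviving case $m=2b$ into a second enumeration for $M$; the trade-off is that your version is entirely mechanical and verifiable line by line. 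Your parenthetical caveat that $1\le\frac12|D|$ requires $|D|\ge2$ is a fair point about the lemma's statement (for $|D|=1$ one has $D=\{0\}$ and $z(D)=1>\frac12$), not a gap in either proof.
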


\begin{proof} To derive a contradiction, assume that $0\notin D$. Replacing $D$ by $-D$, if necessary, we can assume that $D$ contains a unique positive element $p$.  Since $p=a+b$ for some
elements $a,b\in D$, one of the numbers $a$ or $b$ is positive, so it equals $p$ and the other summand is zero.
\end{proof}

The following lemma was proved by  Ingdas~\cite{Z}. We present a short proof for convenience of the reader.

\begin{lemma}[Ingdas]\label{l4} Any decomposable subset $D\subset\IR$ with $|D|=4$ and $z(D)\ge 2$ is equal to the set $\{-2,-1,1,2\}{\cdot}r=D_2{\cdot}r$ for some positive  real number $r$.
\end{lemma}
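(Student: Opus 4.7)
The plan is to exploit sign and size constraints in $D+D$ to force the structure of $D$. Write $D=\{a_1<a_2<a_3<a_4\}$; since $z(D)\ge 2$, we have $0\notin D$.

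First I would show $D$ must contain both positive and negative elements. If all $a_i$ were positive, the smallest element $a_1$ would satisfy $a_1=x+y$ for some $x,y\in D$, but every sum $x+y\geq 2a_1>a_1$, a contradiction. Symmetrically for the all-negative case. Next I would rule out the asymmetric case of one positive and three negatives (or vice versa) by the same extremal argument: if $p$ is the unique positive element, writing $p=x+y$ forces at least one summand to be $\geq p/2>0$, hence equal to $p$, and the other to be $0$, contradicting $0\notin D$.

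Thus $D$ has exactly two positives $p_1<p_2$ and two negatives $n_1<n_2<0$. Now I would determine $p_2$ and $n_1$: writing $p_2=x+y$ with $x,y\in D$, the only possibility that is neither negative, nor zero, nor strictly larger than $p_2$, is $x=y=p_1$, yielding $p_2=2p_1$. The symmetric argument gives $n_1=2n_2$. So $D=\{2n_2,n_2,p_1,2p_1\}$.

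The remaining step is to decompose $p_1$ and $n_2$. Enumerating $p_1=x+y$ with $x,y\in D$: purely-positive or purely-negative combinations are ruled out by sign and magnitude, leaving only $2p_1+n_2=p_1$ or $2p_1+2n_2=p_1$, i.e.\ $p_1=-n_2$ or $p_1=-2n_2$. By symmetry, $|n_2|=p_1$ or $|n_2|=2p_1$. Setting $r=|n_2|$ and $s=p_1$, the four combinations of these two disjunctions reduce (the three mixed ones produce $r=2r$, $s=2s$, or $s=4s$) to the unique solution $s=r$. Therefore $D=\{-2r,-r,r,2r\}=D_2\cdot r$ with $r>0$, and a quick check confirms this set is indeed decomposable with $z(D)\geq 2$.

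The main obstacle is not conceptual but bookkeeping: one must enumerate all candidate decompositions $x+y$ for each element and discard them cleanly by sign and magnitude. Organizing the case split around the extremal elements $p_2, n_1$ first (which each admit only one decomposition) and only then handling the intermediate $p_1, n_2$ keeps the case analysis short.
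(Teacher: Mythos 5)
Your proof is correct and follows essentially the same route as the paper's: exclude $0$, force exactly two positives and two negatives, use the extremal elements to get $p_2=2p_1$ and $n_1=2n_2$, and then pin down the ratio between $p_1$ and $|n_2|$ by decomposing the intermediate elements. The only cosmetic difference is that you decompose both $p_1$ and $n_2$ and intersect the resulting disjunctions, whereas the paper decomposes only the smaller positive element and discards the case $-r=2n$ by noting that $n$ would then fail to lie in $D+D$ --- the same computation packaged slightly differently.
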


\begin{proof} Since $z(D)\ge 2$, the set $D$ does not contain zero. Then $D$ should contains at least two positive numbers and at least two negative numbers (otherwise $D$ will be not decomposable). Since $|D|=4$, the set $D$ contains exactly two positive and two negatice numbers. Let $c$ be the largest positive element of $D$. Since $D$ is decomposable, $c=a+b$ for some $a,b\in D$. Since $D$ does not contain zero, the maximality of $c$ ensures that the elements $a,b$ are strictly positive and hence coincide with the unique positive element $r$ of the set $D\setminus\{c\}$. Therefore, $c=a+b=2r$. By the same reason, the subset $\{x\in D:x<0\}$ of negative numbers is equal to $\{2n,n\}$ for some negative real number $n$. Write the element $r$ as $r=x+y$ for some $x,y\in D$ with $x\le y$.
Taking into account that $D$ contains no zero and $r,2r$ are unique positive elements of $D$, we conclude that $y=2r$ and then $x=r-y=r-2r=-r\in\{2n,n\}$. If $-r=2n$, then $D=\{-r,-\frac12r,r,2r\}$ is not decomposable as $-\frac12r\notin D+D$. So, $r=-n$ and hence $D=\{-2r,-r,r,2r\}=\{-2,-1,1,2\}{\cdot}r$.
\end{proof}

\begin{lemma}\label{l45} Any decomposable subset $D\subset\IR$ of cardinality $|D|\in\{4,5\}$ contains a subset $T\subset D$ of cardinality $|Z|\in\{1,2\}$ with $\sum Z=0$. Consequently, $z(D)\le 2\le\frac12|D|$.
\end{lemma}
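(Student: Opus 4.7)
The plan is to prove the contrapositive: if $D \subset \IR$ is a decomposable set with $|D| \in \{4, 5\}$ that contains neither $0$ nor a pair of opposite elements, then a contradiction arises. This yields the required $Z$ of size $1$ (namely $\{0\}$) or $2$ (a pair $\{a,-a\}$ with $a+(-a)=0$); combined with $|D| \ge 4$ this gives $z(D) \le 2 \le \frac12|D|$.

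The key tool is a uniform extremal observation. Let $\pi$ be the largest positive element of $D$; writing $\pi = a + b$ with $a, b \in D$ forces $a, b > 0$ (any negative summand would make the other exceed $\pi$, and $0 \notin D$) and further $a, b < \pi$ (since $a = \pi$ would force $b = 0$). Hence $D$ has at least two positive elements, and symmetrically at least two negative elements.

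For $|D| = 4$ this gives exactly two positives and two negatives, so Lemma~\ref{l4} applies and yields $D = \{-2r,-r,r,2r\}$ for some $r > 0$; but $\{r,-r\} \subset D$ is then the forbidden opposite pair.

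For $|D| = 5$, after possibly negating $D$ (which preserves all hypotheses), I may assume there are three positive elements and two negatives, which I denote $\mu < \nu < 0$. Applied to $\mu$ the extremal observation forces $\mu = 2\nu$, as this is the only admissible decomposition of $\mu$ into two negatives from $\{\mu,\nu\}$. Now I decompose $\nu$: the sum of any two negative elements of $D$ is at most $2\nu = \mu < \nu$, and the sum of any two positive elements is positive, so $\nu = p + n$ with $p > 0$ and $n \in \{\mu, \nu\}$. The case $n = \nu$ gives $p = 0 \notin D$, and the case $n = \mu = 2\nu$ gives $p = -\nu \in D$, placing the opposite pair $\{\nu,-\nu\}$ inside $D$---contradiction. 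The main delicacy is this final case analysis for $|D|=5$; everything else is forced by the extremal observation and Lemma~\ref{l4}.
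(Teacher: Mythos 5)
Your proof is correct and follows essentially the same route as the paper's: decompose the extreme element on the side having only two elements of that sign to obtain the doubling relation (the paper's $c=2a$, your $\mu=2\nu$), then decompose the smaller of those two to force its negative into $D$. The only differences are cosmetic --- you argue by contradiction, work on the negative side rather than the positive one, and delegate the $|D|=4$ case to Lemma~\ref{l4}, whereas the paper's single direct argument covers $|D|\in\{4,5\}$ uniformly.
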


\begin{proof} If $0\in D$, then $Z=\{0\}$ has $\sum Z=0$ and witnesses that $z(D)=0\le \frac12|D|$.

So, assume that $0\notin D$. Replacing $D$ by $-D$ we can assume that $D$ has at most two positive elements. Let $c$ be the largest positive element of $D$. Since $D$ is decomposable, $c=a+b$ for some $a,b\in D$. Since $D$ does not contain zero, the maximality of $c$ ensures that the elements $a,b$ are strictly positive and hence coincide with the unique positive element of the set $D\setminus\{c\}$. Therefore, $c=a+b=2a$. Write the element $a$ as $a=x+y$ for some $x,y\in D$ with $x\le y$.
Taking into account that $D$ contains no zero and $a,c$ are unique positive elements of $D$, we conclude that $y=c$ and then $x=a-y=a-c=a-2a=-a$. Then the set $Z=\{-a,a\}$ has $\sum Z=0$ and witnesses that $z(D)\le 2\le\frac12|D|$.\end{proof}

\begin{remark} It can be shown that each decomposable subset $D\subset \IR$ of cardinality $|D|=5$ with $0\notin D$ is equal to one of the sets
$$\{-3,-2,-1,1,2\}\cdot r,\;\;\{-4,-2,-1,1,2\}\cdot r,\;\;\{-3,-2,-1,2,4\}\cdot r$$ for some non-zero number $r$.
\end{remark}

\begin{lemma}\label{l6} Any decomposable subset $D\subset\IR$ of cardinality $|D|=6$ with $z(D)\ge3$ coincides with $\{1,2,4,-3,-5,-6\}\cdot r$ for some real number $r$ and hence has $z(D)=3=\frac12|D|$.
\end{lemma}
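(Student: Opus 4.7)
The plan is to exploit the sign structure of $D$ together with $z(D)\geq 3$ (which forbids $0\in D$ and any pair $\{x,-x\}\subset D$) to reduce to a unique arithmetic skeleton. Let $D^+=\{x\in D:x>0\}$ and $D^-=\{x\in D:x<0\}$. By the argument of Lemma~\ref{l45}, the maximum of $D^+$ and the minimum of $D^-$ are each sums of two elements of the same sign, so $|D^+|\geq 2$ and $|D^-|\geq 2$. Replacing $D$ by $-D$ if necessary, I may assume $|D^+|\leq|D^-|$. When $|D^+|=2$, writing $D^+=\{a,b\}$ with $a<b$, decomposing $b$ forces $b=2a$, and decomposing $a$ forces $a=2a+(-a)$, so $-a\in D^-$, violating $z(D)\geq 3$. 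Hence $|D^+|=|D^-|=3$. Write $D^+=\{a,b,c\}$ with $0<a<b<c$ and $D^-=\{d,e,f\}$ with $d<e<f<0$.

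Decomposing $c$ as a sum of two positives yields $c\in\{2a,\,a+b,\,2b\}$. The case $c=a+b$ forces $b=2a$ and $c=3a$, after which decomposing $a$ places $-a$ or $-2a$ into $D$, contradicting $z(D)\geq 3$. The case $c=2a$ (with $b\in(a,2a)$) forces both $b-2a$ and $a-b$ into $D^-$; I then split on the three orderings of the third negative $w$ relative to $b-2a$ and $a-b$, treating the boundary $b=3a/2$ separately. In every sub-case, either decomposing $w$ or the minimum of $D^-$ forces an element $-x$ with $x\in D^+$ into $D^-$, contradicting $z(D)\geq 3$, or the required sum is arithmetically unattainable. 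I expect this elimination of $c=2a$ to be the main technical burden, since it branches into many sub-sub-cases and each must be closed by appealing to $z(D)\geq 3$ or to the ordering constraints on $D^-$.

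The remaining case $c=2b$ yields the prototype. Decomposing $b$ forces $b=2a$ (the alternative would place $-b\in D^-$, violating $z$), so $D^+=\{a,2a,4a\}$; and decomposing $a=p+n$ with $p\in D^+$, $p>a$, gives $n\in\{-a,-3a\}$, and $z(D)\geq 3$ rules out $-a$, hence $-3a\in D^-$. To locate the remaining negatives, I use that $d$ must be a sum of two negatives, so $d\in\{2e,\,e+f,\,2f\}$, while $e\in\{2f\}\cup\{p+d:p\in D^+\}$ and $f=p+n$ with $p\in D^+$ and $n\in\{d,e\}$. The cases $-3a=d$ and $-3a=e$ collapse via these constraints to force one of $-a,-2a,-4a$ into $D^-$, again contradicting $z$. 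Only $-3a=f$ survives: then $-3a=p+n$ with $n\in\{d,e\}$ gives $n\in\{-5a,-7a\}$ after ruling out $-4a$, and intersecting with the forms for $e$ and $d$ leaves the unique assignment $e=-5a$, $d=-6a$. Thus $D=\{a,2a,4a,-3a,-5a,-6a\}=\{1,2,4,-3,-5,-6\}\cdot a$, and reverting the initial negation (if performed) yields $D=\{1,2,4,-3,-5,-6\}\cdot r$ for some real $r$, as desired.
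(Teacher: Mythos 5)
Your overall strategy (sign count, then case analysis on how $\max D^+$ decomposes) is the same as the paper's, and your treatment of the case $c=2b$ checks out. But there is a genuine error in the skeleton: the case $c=2a$ \emph{cannot} be eliminated, because it contains the mirror image of the prototype. Concretely, take $D=\{3,5,6,-1,-2,-4\}=\{1,2,4,-3,-5,-6\}\cdot(-1)$. This set is decomposable ($6=3+3$, $5=6-1$, $3=5-2$, $-1=3-4$, $-2=-1-1$, $-4=-2-2$), satisfies $z(D)=3$, and has $D^+=\{3,5,6\}$, so in your notation $a=3$, $b=5\in(a,2a)$, $c=6=2a$: it sits squarely in the case you claim to kill. (In your sub-case bookkeeping it arises from $w=2(a-b)$ with $b=\tfrac{5}{3}a$, where $b-2a=-\tfrac{a}{3}$, $a-b=-\tfrac{2a}{3}$, $w=-\tfrac{4a}{3}$, and every element does decompose with no forbidden pair $\{x,-x\}$.) The root cause is that your reduction ``replace $D$ by $-D$ so that $|D^+|\le|D^-|$'' gives no normalization in the $3$--$3$ case, so both the prototype and its negative survive, and they land in \emph{different} branches of your trichotomy $c\in\{2a,a+b,2b\}$. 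The paper avoids this by spending the sign-flip symmetry later, at the point where it normalizes the positive side to have the doubling structure $\{p'',2p'',4p''\}$. You must either do the same, or carry the $c=2a$ case to its actual conclusion, namely $D=\{3,5,6,-1,-2,-4\}\cdot s=\{1,2,4,-3,-5,-6\}\cdot(-s)$, rather than to a contradiction.

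A second, independent problem is that the $c=2a$ case is only announced, not proved: ``I expect this elimination \dots to be the main technical burden'' is a plan, and as shown above the plan is aimed at the wrong target (a contradiction that does not exist). Minor additional point: in the $c=2b$ analysis your claim that the sub-cases $-3a=d$ and $-3a=e$ always ``force one of $-a,-2a,-4a$ into $D^-$'' is not accurate in every branch --- some branches die because the constraint sets for $d$ and $e$ have empty intersection rather than by producing a forbidden pair --- though those branches do close correctly when worked out.
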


\begin{proof}
Let $D$ be a decomposable set consisting of six real numbers. If $0\in D$ then
$z(D)=1$. If $D$ contains exactly one (resp. two) positive (or negative) elements then
similarly to the case $n\le 3$ (resp. $4\le n\le 5$) we can show that $z(D)=1$ (resp. $z(D)\le 2$).

So it remains to consider the case when $D$ consists of three positive and three negative numbers.
Let $p_{\max}$ (resp. $n_{\min}$) be the largest positive (resp. the smallest negative)
element of $D$. Write $p_{\max}$ as $p_{\max}=p+p'$ for some numbers $p,p'\in D$ with $p\le p'$. By the maximality of $p_{\max}$, both numbers  $p$
and $p'$ are positive. Assume that $p<p'$ and write $p=\bar p+a$, $p'=\bar p'+a'$
for some elements of $D$ with positive $\bar p$ and $\bar p'$. If $\bar p=p_{\max}$
(resp. $\bar p'=p_{\max}$) then $p'+a=0$ (resp. $p+a'=0$), so $z(D)=2$.
Therefore, we can assume that $\bar p\ne p_{\max}\ne \bar p'$. In this case $\bar p=p'$ and $\bar p'=p$. Consequently, $a=p-\bar p=p-p'=\bar p'-p'=-a'$ and $Z=\{a,a'\}$ is a set with $\sum Z=0$, witnessing that $z(D)\le 2$.

Thus it remains to consider the
case when $p_{\max}=2p'$ for some $p'\in D$ and, by the symmetry, $n_{\min}=2n'$ for some $n'\in D$.
Let $p'$,$2p'=p_{\max},p''$ be positive elements of the set $D$ and $n'$, $2n'=n_{\min},n''$ be its negative elements.
Then $n'$ is a sum of two elements of $D$ at least one of which is negative.
If $n'=2n'+x$ for some $x\in D$ then $n'+x=0$ and $z(D)=2$.
Thus $n'=n''+a$ for some $a\in D$.
If $n''=n'+b$ for some $b\in D$ then $a+b=0$ and $z(D)=2$.
Thus $n''=2n'+\bar p$ for some positive $\bar p\in D$.
Similarly, $p'=p''+a'$ for some $a'\in D$ and $p''=2p'+\bar n$ for some negative $\bar n\in D$.
If $a>0$ and $a'<0$ then $\max\{\bar n,a'\}\le n'$ and $p'\le\min\{\bar p,a\}$. So

$$0=n'+a+\bar p\ge n'+p'+p'>p'+n'>p'+n'+n'\ge p'+a'+\bar n=0,$$
a contradiction.
Thus $a<0$ or $a'>0$. If $a<0$, then since $n'=n''+a$ we have $n'=2n''$. Similarly,
if $a'>0$, then $p'=2p''$. Reverting the signs of elements of $D$, if needed, we can suppose that
$p'=2p''$. Then $\{p'',2p'', 4p''\}\in D$, so $\bar n=-3p''$ and $n''=\bar p+2n'$.
The following cases are possible:
\begin{itemize}
\item[1.] $n'=\bar n=-3p'' $, so $2n'=-6p''$.
\item[1.1.] If $\bar p=p'' $ then $n''=-5p''$ so the proposition claim holds.
\item[1.2.] If $\bar p=2p''$ then $n''=-4p''$, so $n''+4p''=0$ and $z(D)=2$.
\item[1.3.] If $\bar p=4p''$ then $n''=-2p''$, so $n''+2p''=0$ and $z(D)=2$.
\item[2.] $2n'=\bar n=-3p''$, so $n'=-1.5p''$.
\item[2.1.] If $\bar p=p'' $ then $n''=-2p''$, so $n''+2p''=0$ and $z(D)=2$.
\item[2.2.] If $\bar p=2p''$ then $n''=- p''$, so $n''+ p''=0$ and $z(D)=2$.
\item[2.3.] If $\bar p=4p''$ then $n''=p''>0$, a contradiction.
\item[3.] $n''=\bar n=-3p''$. Then $\bar p+2n'+3p''=0$.
\item[3.1.] If $\bar p=p'' $ then $n'=-2p''  $, so $n'+2p''=0$ and $z(D)=2$.
\item[3.2.] If $\bar p=2p''$ then $n'=-2.5p''$, so $2n''=-5p''$ and $n'\not\in D+D$, a
contradiction.
\item[3.3.] If $\bar p=4p''$ then $n'=-3.5p''$, so $2n''=-7p''$ and $n'\not\in D+D$, a
contradiction.
\end{itemize}
\end{proof}

\begin{lemma}\label{l7} Every decomposable subset $D\subset\IR$ of cardinality $|D|=7$ has $z(D)\le 3\le\frac12|D|$.
\end{lemma}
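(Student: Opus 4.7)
The strategy is to mimic the case analysis developed in Lemmas~\ref{l3}, \ref{l45} and especially \ref{l6}. If $0\in D$, then $z(D)=1$, so assume $0\notin D$. A decomposable set in $\IR\setminus\{0\}$ must contain elements of both signs (the element of smallest absolute value cannot be written as a sum of two elements of its own sign), so after replacing $D$ by $-D$ if necessary we may assume that $D$ has at most three positive elements. The split $|D|=7$ into positive and negative parts then falls into exactly three subcases: $1+6$, $2+5$, and $3+4$.

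The $1+6$ split is impossible: the unique positive element $c$ would satisfy $c=a+b$ with $a,b\in D$, and maximality of $c$ forces both summands to be positive and equal to $c$, so $c=2c=0$, contradicting $0\notin D$. In the $2+5$ split, writing the positives as $p_1<p_2$, the argument of Lemma~\ref{l45} forces $p_2=2p_1$, and then decomposing $p_1=x+y$ with $x\le y$ forces $y=p_2$ and $x=-p_1\in D$, giving the zero-sum pair $\{p_1,-p_1\}$ of size $2\le 3$.

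The main case is $3+4$. Write the positives as $p_1<p_2<p_3$ and the negatives as $n_1>n_2>n_3>n_4$. The preliminary reduction from Lemma~\ref{l6} applies verbatim on the positive side (there are still only three positives): decomposing $p_3$ either produces a zero-sum pair $\{x,-x\}\subset D$ or yields $p_3=2p'$ for some $p'\in\{p_1,p_2\}$. The analogous step on the negative side requires a slightly extended version of the same argument, because with four negatives there are more off-diagonal possibilities ($n_4=n_i+n_j$ with $i\ne j$); but each of them is disposed of by the same trick as in Lemma~\ref{l6}, namely writing each summand as a further sum of two elements and exploiting the extremality of $n_4$, yielding either a zero-sum pair or $n_4=2n^*$ for some $n^*\in\{n_1,n_2,n_3\}$.

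After this reduction we have a skeleton $p_3=2p'$, $n_4=2n^*$, together with the third positive $p''\in\{p_1,p_2\}\setminus\{p'\}$ and the two non-extreme negatives $\{n_1,n_2,n_3\}\setminus\{n^*\}$. We now enumerate the possible decompositions of the remaining positives and negatives, branching in the style of the labelled cases 1.1--3.3 in the proof of Lemma~\ref{l6}. Each branch produces either a zero-sum pair $\{x,-x\}\subset D$, a zero-sum triple of the form $\{p_i,n_j,n_k\}$ or $\{p_i,p_j,n_k\}$, or a numerical contradiction (typically an element forced to lie outside $D+D$, or two quantities with opposite signs forced to be equal). The main obstacle is the combinatorial size of the case analysis: the fourth negative element roughly doubles the number of branches compared with Lemma~\ref{l6}. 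On the other hand, this same extra element creates new three-term relations $p_i+p_j+n_k=0$ and $p_i+n_j+n_k=0$ that were unavailable in the $3+3$ setting of Lemma~\ref{l6}, and these are precisely what allow the bound $z(D)\le 3$ (rather than $z(D)\le 2$). No new technique beyond those already used in Lemma~\ref{l6} is required; only a patient enumeration of subcases.
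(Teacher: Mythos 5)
Your reduction to the case of three elements of one sign and four of the other, and your handling of the $1+6$ and $2+5$ splits, match the paper. But in the main $3+4$ case your argument has two genuine gaps. First, you claim that the Lemma~\ref{l6} trick extends to the four-element (negative) side to give ``either a zero-sum pair or $n_4=2n^*$.'' It does not extend so easily: with four negatives $n_1>n_2>n_3>n_4$, the chain of decompositions $n_4=n_i+n_j$, $n_i=\bar n+a$, \dots\ can wander among the three non-minimal negatives (e.g.\ $n_4=n_1+n_2$, $n_1=n_3+a$, $n_2=n_3+b$, $n_3=n_4+c$), producing a four-term relation such as $n_3+a+b+c=0$ rather than a zero-sum pair or the equation $n_4=2n^*$. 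These subcases have no analogue in the $3+3$ setting and are not ``disposed of by the same trick''; you would have to treat them separately, and you do not. Second, and more seriously, after postulating the skeleton $p_3=2p'$, $n_4=2n^*$ you simply assert that ``a patient enumeration of subcases'' yields a zero-sum pair, a zero-sum triple, or a contradiction in every branch. That enumeration \emph{is} the content of the lemma; asserting its outcome without performing it is not a proof, especially since you yourself note that the number of branches roughly doubles relative to Lemma~\ref{l6}.

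The paper avoids both problems by orienting the signs the other way: it arranges for the \emph{three}-element side to be the negatives and runs the two-step Lemma~\ref{l6} reduction only there. That reduction, applied to three negatives, either yields a zero-sum pair or produces elements $a,\bar p\in D$ with $n'+a+\bar p=0$ --- already a zero-sum triple unless $a=\bar p$, in which case the negatives are rigidly pinned down as $\{-2,-3,-4\}$ (after scaling) with $1\in D$. Only the five possible decompositions of $1$ then remain to be checked. The key idea you are missing is that since the target is $z(D)\le 3$ rather than $z(D)\le 2$, the three-term relation coming out of the reduction on the small side can be used directly, collapsing the case analysis to something short enough to actually write down. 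As it stands, your proposal is a plausible plan rather than a proof.
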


\begin{proof}
Let $D$ be a decomposable set consisting of seven real numbers. If $0\in D$ then
$z(D)=1$. If $D$ contains exactly one (resp. two) positive (or negative) elements then
similarly to the case $n\le 3$ (resp. $4\le n\le 5$) we can show that $z(D)=1$ (resp. $z(D)\le 2$).
So, reverting the signs of elements of $D$, if needed, it remains to consider
the case when $D$ consists of four positive and three negative numbers.
Let $p_{\max}$ (resp. $n_{\min}$) be the largest positive (resp. the smallest negative)
element of $D$. Similarly to the proof of Lemma~\ref{l6} we can show that
$n_{\min}=2n'$ for some $n'\in D$.

Let $n'$, $2n'=n_{\min},n''$ be negative elements of the set $D$.
Similarly to the proof of Lemma~\ref{l6} we can show that
$n'=n''+a$ for some $a\in D$ and $n''=2n'+\bar p$ for some positive $\bar p\in D$.
Then $n'+a+\bar p=0$. If these numbers are distinct then $z(D)\le 3$. So we assume the converse.
Since $n'=n''+a$, $a\ne n'$, so $a=\bar p=n'-n''=n''-2n'$ and $3n'=2n''$.
Divide all elements of the set $D$ by $\frac12|n'|$. Then it will have elements $-2$, $-3$, $-4$, and $1$.

Depending on the representation of $1$ as a sum of elements of $D$, the following cases are possible:

\begin{itemize}
\item[1.] $1=3-2$, and $3\in D$. Since $-3\in D$, we have $z(D)\le 2$.
\item[2.] $1=4-3$, and $4\in D$. Since $-4\in D$, we have $z(D)\le 2$.
\item[3.] $1=5-4$, and $5\in D$. Since $-2,-3\in D$, we have $z(D)\le 3$.
\item[4.] $1$ is a sum of distinct positive elements of $D$.
Then $p_{\max}<2$ so a sum of each positive and negative elements of $D$ is negative.
Then the smallest positive element of $D$ does not belong to $D+D$, a contradiction.
\item[5.] $1=0.5+0.5$ and $0.5\in D$. If the smallest positive element $p_{\min}$ of $D$ is less than
$0.5$ then $p_{\max}\le 1+1=2$. Then a sum of each positive and negative elements of $D$ is
non-positive and $p_{\min}\not\in D+D$, a contradiction. Thus $p_{\min}=0.5$ and one of numbers $2.5$,
$3.5$, and $4.5$ belongs to $D$. But $2.5+0.5+(-3)=0$ and $3.5+0.5+(-4)=0$, so
either $z(D)\le 3$ or $4.5$ belongs to $D$. We assume the last case.
Then $D'=\{-4,-3,-2,0.5,1,4.5\}\subset D$. Let $p$ be a unique element of $D\setminus D'$. Since
$D'+D'\not\ni 4.5$, $4.5=p+s$ for some $s\in D$. Since $p\ge 4.5/2=2.25$, $p\not\in D'+D'$
and hence $p=4.5+s'$ for some $s'\in D$. Then $s+s'=0$ and so $z(D)\le 2$.
\end{itemize}
\end{proof}

\section{Proof of Theorem~\ref{t:main}}\label{s:tree}

First we introduce some notation. For a function $f:X\to Y$ and subset $A\subset X$ put $f[A]=\{f(a):a\in A\}$.

By a {\em tree} we understand any non-empty finite partially ordered set $(T,\le)$ such that for every $x\in T$ the set ${\downarrow}x=\{t\in T:t\le x\}$ is linearly ordered.

Let $T$ be a tree. By $\min T$ we denote the smallest element of $T$ and by $\max T$ the set of all maximal elements of $T$. A {\em branch} in a tree is a maximal linearly ordered subset $B\subset T$, which can be identified with the largest element of $B$.

For an element $x$ of a tree $T$ let ${\uparrow}x:=\{y\in T:x\le y\}$  and $\suc_T(x):=\min({\uparrow}x\setminus\{x\})$ be the set of immediate successors of $x$ in the tree $T$.  For any $x\in \max T$ we have $\suc_T(x)=\emptyset$.
For any element $x\ne\min T$ let $\pre_T(x)$ be the unique element $y\in T$ such that $x\in\suc_T(y)$.

A tree $T$ is called {\em binary} if for each $x\in T\setminus\max T$ the set
$\suc_T(x)$ has cardinality 2.

For a branch $B$ in a binary tree, let $\perp_B:B\setminus\max B\to T$ be the function assigning to each element $x\in B\setminus \max B$ the unique element of the set $\suc_T(x)\setminus B$.

A function $f:\max T\to T$ is called {\em regressive} if $f(x)<x$ for each $x\in \max T$.

\begin{lemma}\label{regres} For any regressive function $f:\max T\to T$ on a binary tree $T$ there are distinct elements $x,y\in \max T$ such that $f(x)=f(y)=\max({\downarrow}x\cap{\downarrow}y)$.
\end{lemma}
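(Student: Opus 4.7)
The plan is to prove Lemma~\ref{regres} by induction on $|T|$. The trivial case $|T|=1$ is vacuous since no regressive function exists. Otherwise set $r:=\min T$ and $\suc_T(r)=\{r_1,r_2\}$; write $T_i:={\uparrow}r_i$ for $i\in\{1,2\}$, so that $T=\{r\}\sqcup T_1\sqcup T_2$ and $\max T=\max T_1\sqcup\max T_2$, with each $T_i$ a binary subtree of strictly smaller cardinality than $T$.

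The key observation driving the proof is that for any leaf $x\in\max T_i$ the chain ${\downarrow}x$ in $T$ equals $\{r\}\cup[r_i,x]\subset\{r\}\cup T_i$, so the regressive value $f(x)$ either equals the root $r$ or stays inside the subtree $T_i$ containing $x$. The proof then splits according to this dichotomy. \emph{Case A:} if there exist leaves $x_1\in\max T_1$ and $x_2\in\max T_2$ with $f(x_1)=f(x_2)=r$, then $x_1\ne x_2$ and the chains ${\downarrow}x_1,{\downarrow}x_2$ meet precisely at $r$, so $\max({\downarrow}x_1\cap{\downarrow}x_2)=r=f(x_1)=f(x_2)$ and we are done. \emph{Case B:} otherwise some subtree, say $T_1$, contains no leaf mapping to $r$; by the dichotomy $f(\max T_1)\subset T_1$, so $f$ restricts to a regressive function on the binary tree $T_1$. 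The degenerate possibility $T_1=\{r_1\}$ is ruled out because it would force $f(r_1)=r_1$, contradicting regressiveness, so $|T_1|\ge 3$ and the inductive hypothesis supplies distinct $x,y\in\max T_1\subset\max T$ with $f(x)=f(y)$ equal to the meet of $x$ and $y$ computed in $T_1$. Since $x,y\in T_1$ and $T_1$ is upward closed, this meet coincides with $\max({\downarrow}x\cap{\downarrow}y)$ computed in $T$, closing the induction.

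The main obstacle is really the clean isolation of the dichotomy: one must recognize that the only way the regressive value $f(x)$ can escape the subtree containing $x$ is to land on the root $r$, which lets the whole argument reduce to the two symmetric cases above. Once that is in hand, Case A is essentially a free gift from the geometry at the branching point $r$, and Case B requires only the minor verifications that $T_1$ cannot be a singleton and that the notion of meet is preserved under passage to an upward-closed subtree. The base case $|T|=3$ need not be treated separately, since then both immediate successors of $r$ are leaves with $f$-value forced to be $r$, which is directly handled by Case A.
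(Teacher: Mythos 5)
Your proof is correct and follows essentially the same route as the paper's: both induct on the size/height of the tree, split at the root into the two subtrees above its immediate successors, and distinguish the case where some subtree is $f$-invariant (recurse) from the case where each subtree has a leaf mapped to the root (take those two leaves). Your write-up is slightly more careful in spelling out the dichotomy $f(x)\in T_i\cup\{\min T\}$, the exclusion of the singleton subtree, and the preservation of meets, all of which the paper leaves implicit.
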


\begin{proof} The proof if by induction on the height $\hbar(T):=\max\{|{\downarrow}x|:x\in T\}$ of the binary tree $T$. If $\hbar(T)=1$, then no regressive function $f:\max T\to T$ exists, so the statement of the lemma holds.

Assume that the lemma has been proved for all binary trees of height $<n$. Take a binary tree $T$ of height $n>1$. Let $\min T$ be the root of $T$ and $x_1,x_2$ be two immediate successors of $\min T$ in $T$. Then $T_1:={\uparrow} x_1\setminus\{\min T\}$ and $T_2:={\uparrow}x_2\setminus\{\min T\}$ are trees of height $<n$. Two cases are possible.

1. $f(\max T_i)\subset T_i$ for some $i\in\{1,2\}$. In this case we can apply the inductive assumption and find two elements $x,y\in\max T_i$ such that $f(x)=f(y)=\max({\downarrow}x\cap{\downarrow}y)$.

2. For every $i\in\{1,2\}$ there exists an element $t_i\in\max T_i$ such that $f(t_i)=\min T$. Then $x=t_1$ and $y=t_2$ are two elements with $f(x)=f(y)=\max({\downarrow}x\cap{\downarrow}y)$.
\end{proof}

Now we can present the {\em proof of Theorem~\ref{t:main}}. Given a  finite decomposable subset $D$ of an Abelian group, we should find  two non-empty subsets $A,B\subset D$  with $\sum A+\sum B=0$.
\smallskip

 Let $D$ be a finite subset of an abelian group with $D\subset D+D$.
By a {\em binary $D$-tree} we understand a pair $(T,d)$ consisting of a binary tree $T$ and a function $d:T\to D$ such that each non-maximal element $x\in T$ we have $d(x)=d(y)+d(z)$, where $\{y,z\}=\suc_T(x)$. A binary $D$-tree is called {\em $\perp$-injective} if for each branch $L\subset T$ the restriction $d{\restriction}{\perp}_L[L\setminus\max L]$ is injective. This implies that $\big|{\perp}_L[L\setminus\max L]\big|\le |D|$ and hence $|L|\le |D|+1$. Consequently, each $\perp$-injective binary $D$-tree is finite, so we can choose a maximal $\perp$-injective binary $D$-tree $T$. Since $D\subset D+D$, the tree $T$ is a subtree of a binary $D$-tree $\tilde T=T\cup\max\tilde T$ such that $T\cap\max\tilde T=\emptyset$. The maximality of the tree $T$ ensures that for any $x\in\max T$, there exists an element $x'\in \suc_{\tilde T}(x)$ such that  $d(x')\in d[{\perp}_{{\downarrow}x}[{\downarrow}x\setminus\{x\}]$.
Let $M_2=\{x\in \max T:d[\suc_{\tilde T}(x)]\subset d[{\perp}_{{\downarrow}x}[{\downarrow}x\setminus\{x\}]\}$ and $M_1=\max T\setminus M_2$.

For every $x\in M_1$ let $x_1$ be the unique immediate successor of $x$ such that $$d(x_1)\in d\big[{\perp}_{{\downarrow}x}[{\downarrow}x\setminus\{x\}]\big]$$ and $g(x)\in{\downarrow}x$ be a (unique) point such that $d(x_1)=d({\perp}_{{\downarrow}x}(g(x)))$. Let $f(x):=g(x)$.

For every $x\in M_2$ and every point $x'\in \suc_{\tilde T}(x)$ there is a point $g(x')\in{\downarrow}x$ such that $d(x')=d({\perp_{{\downarrow}x}}(g(x')))$. Let $f(x):=\max\{g(x'):x'\in\suc_{\tilde T}(x)\}$.

Now observe that we have defined a regressive function $f:\max T\to T$. By Lemma~\ref{regres}, there are two maximal elements $x,y\in\max T$ such that $f(x)=f(y)=\max({\downarrow}x\cap{\downarrow}y)$.

By the definition of $f(x)$ the set $\suc_{\tilde T}(x)$ contains a point $x_1$ such that $f(x)=g(x_1)$. Let $x_2$ be the unique point of $\suc_{\tilde T}(x)\setminus\{x_1\}$. The definition of the function $f$ guarantees that $d(x_2)\notin \{d(\perp_{{\downarrow}x}(t)):f(x)<t<x\}$.
By analogy the set $\suc_{\tilde T}(y)$ can be written as $\{y_1,y_2\}$ such that and $f(y)=g(y_1)$ and  $d(y_2)\notin \{d(\perp_{{\downarrow}y}(t)):f(y)<t<y\}$.

Let $g'(x_1)={\perp}_{{\downarrow}x}(g(x_1))={\perp}_{{\downarrow}x}(f(x))$ and $g'(y_1)={\perp}_{{\downarrow}y}(g(y_1))={\perp}_{{\downarrow}y}(f(y))$. Observe that $\{g'(x_1),g'(y_1)\}=\suc_T(f(x))$, $g'(x_1)\in{\downarrow}y$ and $g'(y_1)\in{\downarrow}x$.

Let $$A_T=\{{\perp}_{{\downarrow}x}(t):f(x)<t<x\}\mbox{  and }B_T:=\{{\perp}_{{\downarrow}y}(t):f(y)<t<y\}.$$
It follows from the definition of $f(x)=f(y)$ that $d(x_2)\notin d[A_T]$ and $d(y_2)\notin d[B_T]$.

By induction it can be shown that
$$
d(y_1)=d(g'(y_1))=d(x_1)+d(x_2)+\sum_{t\in A_T}d(t)$$
and
$$d(x_1)=d(g'(x_1))=d(y_1)+d(y_2)+\sum_{t\in B_T}d(t).$$
Then $$d(y_1)-d(x_1)=\sum_{t\in A_T\cup \{x_2\}}d(t)=-\sum_{t\in B_T\cup \{y_2\}}d(t)$$ and finally $\sum A=-\sum B$ for the sets $A=d[A_T]\cup \{d(x_2)\}$ and $B=d[B_T]\cup\{d(y_2)\}$.

{}

\end{document}